\newif\ifdetails
\newcommand{\DETAIL}[1]%
{\ifdetails\par\fbox{\begin{minipage}{0.9\linewidth}\textit{Detail:}
			#1\end{minipage}}\par\fi}
\newcommand{\TODO}[1]%
{\ifdetails\par\fbox{\begin{minipage}{0.9\linewidth}\textbf{TODO:}
			#1\end{minipage}}\par\fi}
\newtheorem{lem}{Lemma}
\newtheorem{pro}[subsection]{Proposition}
\newtheorem{thm}[subsection]{Theorem}
\theoremstyle{remark}
\newtheorem{exa}{Example}
\newtheorem{defn}[subsection]{Definition}
\newcommand{\old}[1]{{}}
\title{On isomorphism classes of leaf-induced subtrees in topological trees.}
\author{Audace A. V. Dossou-Olory}
\author{Ignatius Boadi}
\address{audace a. v. dossou-olory\\ department of mathematics and applied mathematics\\ university of johannesburg \\ p.o. box 524 \\ auckland park\\johannesburg 2006\\south africa\\ and d{\'e}partement d’hydrologie et de gestion des ressources en eau\\ centre d’excellence d’afrique pour l’eau et l’assainissement\\ institut national de l'eau.
}
\email{audace@aims.ac.za}
\address{ignatius boadi\\african institute for mathematical sciences\\ ghana\\ summerhill estates\\ east legon hills\\ santoe accra\\ ghana}
\email{ignatius@aims.edu.gh}
\keywords{topological trees, non-isomorphic leaf-induced subtrees, complete $d$-ary trees, $d$-ary caterpillars, stars, graph theory, enumerative combinatorics, networks}
\begin{document}

\begin{abstract}
A subtree can be induced in a natural way by a subset of leaves of a rooted tree. We study the number of nonisomorphic such subtrees induced by leaves (leaf-induced subtrees) of a rooted tree with no vertex of outdegree 1 (topological tree). We show that only stars and binary caterpillars have the minimum nonisomorphic leaf-induced subtrees among all topological trees with a given number of leaves. We obtain a closed formula and a recursive formula for the families of $d$-ary caterpillars and complete $d$-ary trees, respectively. An asymptotic formula is found for complete $d$-ary trees using polynomial recurrences. We also show that the complete binary tree of height $h>1$ contains precisely $\lfloor 2(1.24602...)^{2^h}\rfloor$ nonisomorphic leaf-induced subtrees.
\end{abstract}

\maketitle

\section{Introduction and preliminary}
Analogous to sampling in statistics, subtrees of trees are very useful when networks are being analyzed. Since networks can be large and also data structures are usually complex and large, subtrees are needed because they are usually manageable to analyze than the whole structure. Also, to determine the reliability of a network when there is vertex or edge failure, one can look at the number of subtrees of the graph which represents the network. The larger the number of subtrees of this graph, the more reliable this network is; see, as examples, \cite{xiao2017trees,  szekely2007binary, szekely2005subtrees}. Furthermore, leaf-induced subtrees are applicable in web analytics, especially in analyzing access patterns of visitors of a particular website \cite{fully}. EvoMiner, an efficient algorithm for frequent subtree mining (FST) of subtrees of pyhologenetic trees is introduced in \cite{evominer}. FST is particularly useful when considering ancestry of current generation which are naturally the leaves of a rooted tree \cite{analysis}.
%

When a vertex of a tree is designated as the root, we have a rooted tree. A topological tree is a rooted tree with no vertex of outdegree one (or degree two, except possibly the root). They are also referred to as series-reduced or homeomorphically irreducible trees \cite{ bergeron1998combinatorial, allman2004mathematical}. By definition, the tree that has only one vertex is a topological tree. A topological tree with each vertex having outdegree at most $d$ is a $d$-ary tree. In a strict or full $d$-ary tree, each vertex has outdegree 0 or $d$. They are applicable mainly in computer science where they are used in designing and building data structures. This leads to efficient data organization, easy access and modification. In \cite{maryapp}, $d$-ary trees are used to develop an efficient solution to the Empirical Cumulative Distribution Function (ECDF) searching problem. The ECDF searching problem arises in multivariate statistics where given $N$ points in $d$-dimensional space, a point $X = \{x_1, x_ 2, \ldots,x_d\}$ dominates another point $Y = \{y_1,y_2,\ldots,y_d\}$ if for all $i \in \{ 1, 2, \ldots,d\}$, $x_i \geq y_i$. The ECDF searching problem primarily deals with finding the ratio of the number of $Ys$ dominated by a given $X$, to $N$, the number of data points \cite{ECDF}. Data structures using $d$-ary trees are designed in \cite{maryapp} which enables efficient ways of conducting ECDF queries.

A caterpillar is a tree whose internal vertices lie on a single path (called the backbone) and all leaves are attached directly to the internal vertices. The caterpillar is found to be an extremal graph when determining the maximum or minimum of various distance-based graph invariants \cite{cater, dadedzi}. For example, Dadedzi et al. found in \cite{dadedzi} that the caterpillar has the greatest distance spectral radius among trees with a given degree sequence. Caterpillars are applicable in layouts of communication and electrical networks where the internal vertices represent major outlets and leaves represent consumers. They are also widely used in chemical graph theory; see, as example, \cite{wagner2018introduction}. They are applied in modeling interactions and in analysis of benzenoid hydrocarbons. Caterpillars are sometimes called Gutman trees, since Ivan Gutman introduced them to chemical graph theory and made extensive use of them in his works; see, as examples, \cite{gutman2000benzenoids, gutman2012introduction}. They are also sometimes called benzenoid trees \cite{elcatapp}.  When one of the internal vertices of a caterpillar is designated as the root, we have a rooted caterpillar. 

We mainly focus on two families of topological trees in this work: 
\begin{enumerate}
	\item $d$-ary caterpillars: A $d$-ary caterpillar is defined as a strict $d$-ary tree which is also a rooted caterpillar. A $d$-ary caterpillar is denoted by $F^d_n$, where $n$ is the number of leaves. In Figure~\ref{2ary}, we show the ternary caterpillar with $7$ leaves, $F^3_7$.
	At each level of a $d$-ary caterpillar, there are $d-1$ leaves and one internal vertex except at the highest level, where there are $d$ leaves and one internal vertex. Thus, the number of leaves of a $d$-ary caterpillar of height h is $1+h(d-1)$. 
	
	\begin{figure}[!h]
		\centering
		\includegraphics[scale=0.55]{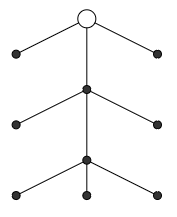}
		\caption{The ternary caterpillar with $7$ leaves, $F^3_7$.}
		\label{2ary}
	\end{figure}
	\item The complete $d$-ary tree of height $h$, denoted by $C^d_h$ is a strict $d$-ary tree which has all leaves at distance $h$ from the root, thus all leaves lie at the same level in the tree. In Figure~\ref{CDdh}, we show the complete ternary tree whose height is two, $C_2^3$. In a complete $d$-ary tree, there are $d$ vertices at level 1.  Each of these vertices are then connected to $d$ vertices at level 2, yielding $d^2$ vertices at level 2. Hence at level $h$, there are $d^h$ vertices. Since all leaves of a complete $d$-ary tree are at level $h$, the number of leaves of $C^d_h$ is $d^h$.	When $h=0$, we have the one-vertex tree and when $h=1$, we have the star with $d$ leaves.
	
	\begin{figure}[h!]
		\centering
		\includegraphics[scale=0.5]{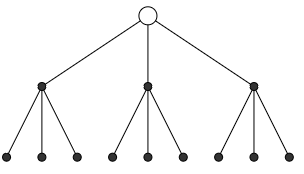}
		\caption{The complete ternary tree with height 2, $C^3_2$.}
		\label{CDdh}
	\end{figure}

\end{enumerate}

Two graphs are
isomorphic if you can label both graphs with the same labels such that any selected vertex has the same neighbors in both graphs.

By a leaf-induced subtree of a rooted tree, we mean a subtree induced by selected leaves of a tree. A subtree can be induced in a natural way by a subset of leaves of a rooted tree. The number of nonisomorphic such leaf-induced subtrees of a given rooted tree is the main subject of this work. The subject of leaf-induced subtrees of rooted trees has been well studied \cite{fibtrees,inducibility,dossou2018inducibility,dossou2019minimum,czabarka2017inducibility,inducibilitytop,furtherresults,inducibilityrooted}.
To form a leaf-induced subtree of a given rooted tree $T$, we consider the power set of the leaves of $T$ excluding the null set. If $S$ is a member of this set, then the subtree induced by the leaves in $S$ is obtained by 
\begin{enumerate}
	\item extracting the smallest subtree of $T$ containing no leaf other than those in $S$. To do this, we first locate the vertex which is the most recent common ancestor of the leaves in $S$. We then select all vertices and edges connecting the leaves in $S$ to the vertex identified as the most recent common ancestor \cite{inducibility,czabarka2017inducibility}, 
	
	\item contracting all vertices of outdegree 1 in the smallest subtree from step 1 \cite{inducibility,czabarka2017inducibility}. Suppose $g$ is a vertex with outdegree 1 in the subtree obtained from step 1 and it has neighbors $f$ and $h$, then we have edges $\{f,g\}$ and $\{g,h\}$; we contract $g$ and create the edge $\{f,h\}$. This is done for all instances of vertices with outdegree 1 to create a leaf-induced subtree.
\end{enumerate}
A single leaf can be selected and used to create a leaf-induced subtree which is the one-vertex tree. By definition, any leaf-induced subtree of a rooted tree is a topological tree. 

\begin{defn}
	We define $N(T)$ as the number of nonisomorphic leaf-induced subtrees of a topological tree $T$.	
\end{defn}

Clearly, there is only one (up to isomorphism) subtree induced by a given subset of leaves of a topological tree. Hence, the total number of leaf-induced subtrees of a topological tree with $n$ leaves is just $2^n -1$ (excluding the empty set). We consider subtrees induced by leaves of a topological tree and select only one representative from every isomorphism class. 
In Example~\ref{leafinduced}, we construct two isomorphic leaf-induced subtrees of the tree in Figure~\ref{rooted1}.
\begin{exa}\label{leafinduced}
	Consider the tree labeled 1 in Figure~\ref{rooted1}. Let $L=\{l_1,l_3,l_4,l_6\}$. We first extract the smallest subtree containing all the vertices and edges lying on the paths connecting the leaves in $L$ to their most recent common ancestor. Hence we get the leaf-induced subtree labeled 2 in Figure~\ref{rooted1}.
	\begin{figure}[h!]
		\centering
		\includegraphics[scale=0.4]{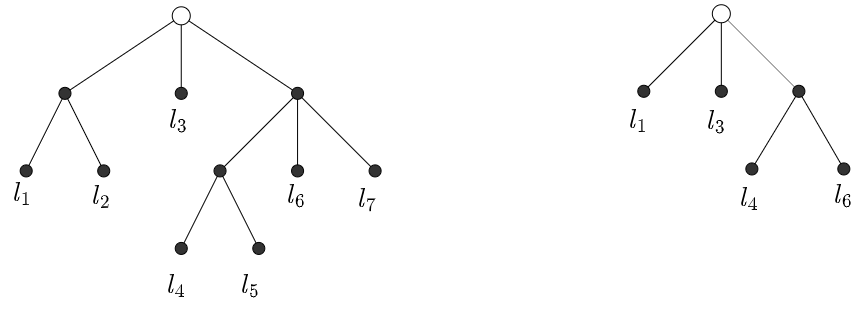}
		\caption{A topological tree and a subtree induced by some of its leaves.}
		\label{rooted1}
	\end{figure}

\end{exa}

\section{main results}

We present formulas for the number of nonisomorphic leaf-induced subtrees of a $d$-ary caterpillar and a complete $d$-ary tree. We also give the minimum number of nonisomorphic leaf-induced subtrees of a topological tree with $n$ leaves together with the characterization of all extremal trees.

All topological trees with up to four leaves are either stars or binary caterpillars except the trees shown in Figure~\ref{nonstar}.
\begin{figure}[h!]
	\centering
	\includegraphics[scale=0.7]{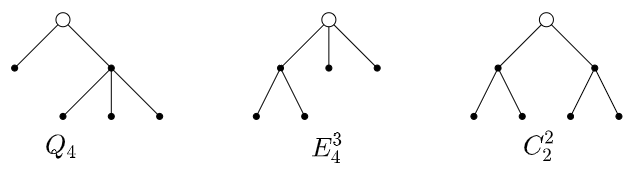}
	\caption{Some topological trees with 4 leaves.}
	\label{nonstar}
\end{figure}

There are 4 leaf-induced subtrees of $C^2_2$ and 5 of each of $Q_4$ and $E^3_4$.
Theorem~\ref{thm1} states the minimum number of leaf-induced subtrees among all topological trees with $n>4$ leaves and also states the corresponding extremal trees.
\begin{thm}\label{thm1} Let $n\geq5$ be an integer. Every $n$-leaf topological tree $T$ has at least $n$ nonisomorphic leaf-induced subtrees, with equality if and only if $T$ is a star or a binary caterpillar. 
\end{thm}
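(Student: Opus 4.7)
The plan is to establish the lower bound trivially, verify achievability for $S_n$ and $F^2_n$ by direct inspection, and then run a structural argument driven by the $3$-leaf isomorphism class. The lower bound $N(T)\geq n$ is immediate because for each $k\in\{1,\ldots,n\}$ the choice of any $k$ leaves produces a leaf-induced subtree with exactly $k$ leaves, and subtrees with different leaf counts are non-isomorphic. For achievability I would check that every $k$-leaf subset of $S_n$ induces $S_k$ (the pairwise LCAs all collapse to the root), while every $k$-leaf subset of $F^2_n$ induces $F^2_k$ (after contracting the intervening degree-two backbone vertices).

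For the converse I would assume $N(T)=n$, so that each $k\in\{1,\ldots,n\}$ has a unique class $T_k$ of leaf-induced subtrees. The argument pivots on the fact that the only $3$-leaf topological trees are $S_3$ and $F^2_3$. If $T_3=S_3$, then any three leaves $a,b,c$ satisfy $\mathrm{LCA}(a,b)=\mathrm{LCA}(a,c)=\mathrm{LCA}(b,c)$; fixing $a$ and setting $v=\mathrm{LCA}(a,b_0)$ for some other leaf $b_0$ forces $\mathrm{LCA}(a,b)=v$ for every leaf $b$, so $v$ is a common ancestor of all leaves and must be the root. Varying $a$, every pair of leaves has LCA at the root, so every leaf is a direct child of the root and $T=S_n$.

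If $T_3=F^2_3$, I would peel leaves from the root recursively. The root must have exactly two children, for three or more would yield $S_3$ by choosing one leaf per distinct root-subtree. Next, one of the two root-children must be a leaf, because otherwise the two root-subtrees carry $a\geq 2$ and $n-a\geq 2$ leaves, and with $n\geq 5$ we may assume $a\geq 3$; a $3+1$ selection then induces $F^2_4$, while a $2+2$ selection induces $C^2_2$, contradicting uniqueness of $T_4$. Writing the leaf child as $\ell_0$ and the internal child as $v$, the subtree $T'=T_v$ satisfies $N(T')=n-1$ and $T'_3=F^2_3$, because leaf-induced subtrees of $T'$ are inherited as leaf-induced subtrees of $T$. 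For $n\geq 6$, induction on $n$ forces $T'\in\{S_{n-1},F^2_{n-1}\}$, and the star is excluded by $T'_3\neq S_3$, so $T'=F^2_{n-1}$ and $T=F^2_n$. The base case $n=5$ must be handled directly: the only four-leaf topological trees satisfying Case~B are $F^2_4$ and $C^2_2$, and $C^2_2$ is ruled out because it would force $T$ to have both $C^2_2$ (from four leaves of $T'$) and $F^2_4$ (from $\ell_0$ together with three leaves of $T'$) as non-isomorphic $4$-leaf classes.

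The step I expect to be the delicate one is the exclusion of two internal root-children in the $F^2_3$ case: this is exactly where the hypothesis $n\geq 5$ is used essentially, and the tree $C^2_2$ at $n=4$ is the explicit witness that the characterization genuinely fails without it. The same exclusion has to be carried out a second time at the base $n=5$ of the recursion, and these two essentially identical pieces of bookkeeping concentrate nearly all of the care the proof requires.
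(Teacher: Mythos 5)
Your proof is correct, and its ``only if'' direction is organized genuinely differently from the paper's. The paper argues the contrapositive directly: for any $T$ that is neither a star nor a binary caterpillar it exhibits two non-isomorphic leaf-induced subtrees of equal size --- $S_3$ and $F^2_3$ when some vertex has outdegree at least three, and $C^2_2$ and $F^2_4$ when $T$ is binary --- which, combined with the one-class-per-leaf-count lower bound, immediately gives $N(T)\geq n+1$. You instead assume equality and reconstruct $T$ by induction. The dichotomy you draw on the unique $3$-leaf class $T_3$ is the same dichotomy the paper draws on maximal outdegree ($S_3$ appears if and only if some vertex has outdegree at least three), and your $3{+}1$ versus $2{+}2$ selections at the root produce exactly the paper's witness pair $F^2_4$ versus $C^2_2$; but where the paper stops once the witnesses are found, you peel the leaf child off the root and recurse to force the caterpillar shape, with a separate check at the base $n=5$. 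The paper's route is shorter; yours is more careful precisely where the paper is terse: the paper justifies the presence of $C^2_2$ in the binary case only by ``height at least three,'' which on its own does not suffice (binary caterpillars of arbitrary height contain no $C^2_2$; one also needs that $T$ is not a caterpillar, so that some vertex has two internal children), whereas your root-split argument locates the two required internal branches explicitly. Both arguments correctly isolate $n\geq 5$ as the threshold, via the observation that a $2{+}2$ root split can only be played against a $3{+}1$ split when $n\geq 5$, with $C^2_2$ the four-leaf exception.
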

\begin{proof}
	Consider the star $S_n$, with $n$ leaves. Since all leaf-induced subtrees of $S_n$ are themselves stars (see Figure~\ref{stars}), we obtain $n$ nonisomorphic leaf-induced subtrees for $S_n$. Consider the binary caterpillar with $n$ leaves $f_n^2$. All leaf-induced subtrees of $F_n^2$ are also binary caterpillars. Hence we obtain $n$ nonisomorphic leaf-induced subtrees for $F^2_n$ (see Figure~\ref{theorem1}). 
	\begin{figure}[h!]
		\centering
		\includegraphics[scale=0.5]{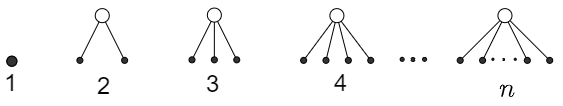}
		\caption{All leaf-induced subtrees of $S_n$.}
		\label{stars}
	\end{figure}
	
	\begin{figure}[h!]
		\centering
		\includegraphics[scale=0.5]{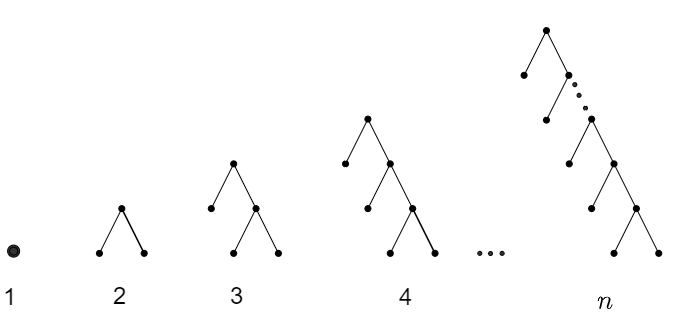}
		\caption{All leaf-induced subtrees of $F^2_n$.}	
		\label{theorem1}
	\end{figure}
	
	Finally, we show that a topological tree $T$ with $n\geq5$ leaves which is neither a star nor a binary caterpillar has at least $n+1$ nonisomorphic leaf-induced subtrees. Since $T$ is neither a star, nor a  binary caterpillar, its root should have outdegree at most $n-1$ and at most $n-2$ leaves attached to it. The key to the proof of this part of the theorem is to show that there exists at least two nonisomorphic leaf-induced subtrees with $k$ leaves for some $k \in \{3, \ldots,n\}$. We consider two possible cases: \begin{enumerate}
		\item If $T$ contains a vertex of outdegree at least three, then the leaf-induced subtrees with three leaves, $S_3$ and $F^2_3$ are both present (see the trees labeled $3$ in Figures~\ref{stars} and \ref{theorem1} respectively). Hence we have at least $n+1$ nonisomorphic leaf-induced subtrees for $T$.
		
		\item If $T$ is a binary tree, then it must be of height at least three since it has at least five leaves. The trees $C_2^2$ (see Figure~\ref{nonstar}) and $F_2^2$ (see the tree labeled 4 in Figure~\ref{theorem1}) each with four leaves, are leaf-induced subtrees of $T$.  Hence we have at least $n+1$ nonisomorphic leaf-induced subtrees for $T$.
	\end{enumerate}
\end{proof}
\section*{2.1 $d$-ary Caterpillars}
\begin{thm}
	Let $d \geq 3$ be a fixed positive integer. For the $d$-ary caterpillar $F_n^d$, we have
	\begin{equation*}
		N(F^d_n) = \dfrac{(d-1)^{\frac{n+d-2}{d-1}}-1}{d-2}.
	\end{equation*}
\end{thm}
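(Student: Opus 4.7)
The plan is to identify isomorphism classes of leaf-induced subtrees of $F_n^d$ with integer tuples and count those tuples. Recall that $F_n^d$ has height $h = (n-1)/(d-1)$, with $h$ internal vertices on its backbone carrying $d-1$ leaves at each of the top $h-1$ backbone positions and $d$ leaves at the deepest one. Since the internal vertices of any leaf-induced subtree are ancestors of selected leaves in the original backbone, they still lie on a single path; hence every leaf-induced subtree of $F_n^d$ is itself a rooted caterpillar, and its isomorphism class is encoded by a tuple $(t_1, t_2, \ldots, t_m)$ listing the leaf-count at each internal vertex from top to bottom, together with the exceptional single-leaf class which I handle separately.

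Next I would characterize which tuples actually arise. After contracting degree-$1$ vertices, every non-bottom backbone position must carry at least one leaf and the bottom must carry at least two, so $t_i \geq 1$ for $i < m$ and $t_m \geq 2$ whenever $m \geq 2$. Because non-deepest levels in $F_n^d$ host at most $d-1$ leaves and the deepest hosts at most $d$, the necessary bounds $t_i \leq d-1$ for $i < m$, $t_m \leq d$, and $m \leq h$ follow at once. Proving that these conditions are also \emph{sufficient}, i.e., that every such tuple is realized, will be the main obstacle: given $(t_1, \ldots, t_m)$ satisfying the bounds I would choose $m$ distinct levels $\ell_1 < \cdots < \ell_m$ in $F_n^d$ (with $\ell_m = h$ when $t_m = d$) and select $t_i$ leaves at level $\ell_i$; a direct outdegree calculation at each backbone vertex then shows that only the unused backbone stretches between the $\ell_i$'s get contracted, yielding exactly the canonical form $(t_1, \ldots, t_m)$. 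A subtlety to address is that some alternative selections trigger a bottom contraction (when only one leaf is picked at the deepest activated level), but these produce canonical forms already on the list, so no tuples are lost or double-counted.

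Finally, counting is direct: for each $m \in \{1, \ldots, h\}$ there are $(d-1)^{m-1}$ choices for $(t_1, \ldots, t_{m-1}) \in \{1, \ldots, d-1\}^{m-1}$ and $d-1$ choices for $t_m \in \{2, \ldots, d\}$, giving $(d-1)^m$ tuples of length $m$. Adding the one-vertex class,
\begin{equation*}
N(F_n^d) \;=\; 1 + \sum_{m=1}^{h}(d-1)^m \;=\; \frac{(d-1)^{h+1}-1}{d-2},
\end{equation*}
and the substitution $h + 1 = (n+d-2)/(d-1)$ then yields the closed form claimed in the theorem.
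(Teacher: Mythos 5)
Your proposal is correct and follows essentially the same route as the paper: both encode an isomorphism class by the tuple of leaf-counts along the surviving backbone (bottom entry in $\{2,\ldots,d\}$, the others in $\{1,\ldots,d-1\}$) and sum $(d-1)^m$ over the possible lengths $m\le h$, plus one for the single-leaf tree. Your write-up is merely more explicit than the paper's bottom-up construction about why these tuples classify the subtrees (realizability, distinctness, and the contraction subtlety at the deepest activated level).
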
 
\begin{proof}
	Since $F_n^d$ is a strict $d$-ary tree, it has precisely $(n-1)/(d-1)$ internal vertices which is also its height. Consider the internal vertex at the highest level which has $d$ leaves.
	\begin{enumerate}
		\item We can attach from 2 to $d$ leaves to this vertex to get $d-1$ leaf-induced subtrees. Hence in addition to the one-leaf rooted subtree, we have 1 + $(d-1)$ leaf-induced subtrees.
		\item For each of the $d-1$ leaf induced subtrees from step $1$, we move to the next internal vertex above the vertex considered in step 1 and attach from 1 to $d-1$ leaves of this vertex to give $(d-1)(d-1) = (d-1)^2$ leaf-induced subtrees resulting in $1 +(d-1) + (d-1)^2$ leaf-induced subtrees. 
		\item For each of the $(d-1)^2$ leaf-induced subtrees from step 2, we move to the next vertex and attach from 1 to $d-1$ leaves to give $(d-1)^2(d-1) = (d-1)^3$ leaf -induced subtrees yielding $1 + (d-1) + (d-1)^2 + (d-1)^3$ leaf-induced subtrees in total.
	\end{enumerate}
	This process is repeated for the new subtrees formed up the backbone of the tree until we reach the root. The number of times this operation is done is equal to the height of the tree. We therefore have
	\begin{align*}\nonumber
		N(F^d_n) &= 1 + (d-1) + (d-1)^2  +\cdots+(d-1)^{\frac{n-1}{d-1}}\\\label{formulacat}
		&=  \dfrac{(d-1)^{\frac{n+d-2}{d-1}}-1}{d-2}.
	\end{align*}
\end{proof}

\section*{2.2 Complete $d$-ary Trees}
In this section, we develop a recursive formula and an asymptotic formula for $N(C_h^d)$. Lemma~\ref{lem1} below will be used in deriving the asymptotic formula for $N(C_h^d)$.

\begin{lem}\label{lem1}
	Let $d\geq2$ be a fixed integer and $a_0,a_1,a_2,\ldots,a_d$ be real numbers such that $a_i \geq 0,$ and $a_d \neq 0$. If $(A_n)_{n\geq 0}$ is a polynomial sequence defined recursively by $$A_0> 0, A_n = \sum_{k=0}^{d}a_k.A_{n-1}^k,$$
	then
	\begin{equation}\label{loganlemma}
		\log(A_n) = \dfrac{d^n - 1}{d-1}\log(a_d)+d^n\bigg(\log(A_0)+\sum_{j=0}^{n-1}d^{-1-j}.\log \bigg(1 + \sum_{k=0}^{d-1}\dfrac{a_k}{a_d}.A_j^{k-d}\bigg)\bigg)
	\end{equation}
	for every $n\geq 1.$ Moreover, if $(A_n)_{n\geq 0}$ is an increasing sequence and $\lim\limits_{n\rightarrow\infty}A_n = \infty$, we also obtain
	\begin{equation}
		\small{A_n = (1 + o(1))a_d^{-\frac{1}{d-1}}\exp\bigg(d^n\bigg(\dfrac{\log(a_d)}{d-1}+\log(A_0)+\sum_{j=0}^{\infty}d^{-1-j}\log\bigg(1 + \sum_{k = 0}^{d-1}\dfrac{a_k}{a_d}.A_{n-1}^{k-d}\bigg)\bigg)\bigg)}
	\end{equation}
	as $n\rightarrow \infty$.
\end{lem}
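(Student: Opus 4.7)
The plan is to reduce the nonlinear recurrence to a first-order linear one by passing to logarithms. Since $a_d>0$ and $A_{n-1}>0$, we may factor
\[
A_n \;=\; a_d A_{n-1}^d\Bigl(1+\sum_{k=0}^{d-1}\frac{a_k}{a_d}\,A_{n-1}^{k-d}\Bigr).
\]
Defining $\epsilon_j := \log\!\Bigl(1+\sum_{k=0}^{d-1}(a_k/a_d)\,A_j^{k-d}\Bigr)$ and taking logs yields the first-order linear recurrence
\[
\log A_n \;=\; \log a_d + d\log A_{n-1} + \epsilon_{n-1}.
\]
A routine induction unrolls this to
\[
\log A_n \;=\; \log(a_d)\sum_{i=0}^{n-1}d^i + d^n\log A_0 + \sum_{j=0}^{n-1} d^{\,n-1-j}\epsilon_j,
\]
and substituting $\sum_{i=0}^{n-1}d^i=(d^n-1)/(d-1)$ together with $d^{n-1-j}=d^n d^{-1-j}$ gives equation (1).

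For the asymptotic formula (2), the idea is to extend the inner finite sum to an infinite series and control the tail after multiplication by $d^n$. Under the hypotheses $A_0>0$, $A_n\nearrow\infty$, and $a_k\geq 0$, we have $A_j\geq A_0>0$ for all $j$, so each $\epsilon_j$ lies in the bounded interval $\bigl[0,\log(1+\sum_{k}(a_k/a_d)A_0^{k-d})\bigr]$; since $A_j\to\infty$ and every exponent $k-d$ is negative, $\epsilon_j\to 0$. Setting $M_n:=\sup_{j\geq n}\epsilon_j$, we have $M_n\to 0$ and
\[
d^n\sum_{j=n}^{\infty}d^{-1-j}\epsilon_j \;\leq\; \frac{M_n}{d-1}\;=\;o(1),
\]
so replacing $\sum_{j=0}^{n-1}$ by $\sum_{j=0}^{\infty}$ in (1) changes the right-hand side by an additive $o(1)$. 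Splitting $\log(a_d)(d^n-1)/(d-1)$ as $d^n\log(a_d)/(d-1)-\log(a_d)/(d-1)$ and exponentiating produces the prefactor $e^{-\log(a_d)/(d-1)}=a_d^{-1/(d-1)}$ and a multiplicative error $\exp(o(1))=1+o(1)$, yielding (2).

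The only nonbookkeeping ingredient is the tail bound for $\sum_{j\geq n} d^{-1-j}\epsilon_j$; everything else is algebra driven by the linear recurrence. One small point I would flag while writing the proof: the argument of the innermost $\log$ in the infinite series in (2) should read $A_j^{k-d}$ rather than $A_{n-1}^{k-d}$, to match the convention in (1) and to make the summation genuinely infinite in $j$. I would prove (2) with that correction, since it is the only interpretation consistent with the derivation above.
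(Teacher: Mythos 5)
Your proof is correct and follows essentially the same route as the paper: factor out $a_dA_{n-1}^d$, unroll the resulting linear recurrence for $\log A_n$, and bound the geometric tail $d^n\sum_{j\ge n}d^{-1-j}\epsilon_j$ by $o(1)$ using that $\epsilon_j\to0$. Your remark about the typo is also right -- the paper's own proof works with $A_j^{k-d}$ inside the infinite sum (its quantity $K(d)$), so the corrected statement is the one actually established.
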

\begin{proof}
	To prove (\ref{loganlemma}), consider the recursion
	\begin{align}\nonumber
		A_n &= a_0 + a_1A_{n-1} + a_2A_{n-1}^2 + \cdots + a_dA_{n-1}^d,\\\nonumber
		\dfrac{A_n}{A^d_{n-1}} &= a_d + \dfrac{a_{d-1}}{A_{n-1}} + \dfrac{a_{d-2}}{A_{n-1}^2} + \cdots+\dfrac{a_0}{A^d_{n-1}},\\\nonumber
		\log\bigg(\dfrac{A_n}{A_{n-1}^d}\bigg)&=\log\bigg(a_d\bigg(1+ \dfrac{a_{d-1}}{a_dA_{n-1}} + \dfrac{a_{d-2}}{a_dA_{n-1}^2} + \cdots+\dfrac{a_0}{a_dA^d_{n-1}}\bigg)\bigg),\\\nonumber
		\log A_n-d\log A_{n-1} &=  \log a_d + \log\bigg(1 + \sum_{k = 0}^{d-1}\dfrac{a_k}{a_d}.A_{n-1}^{k-d}\bigg),\\\label{an}
		\log A_n &= d\log A_{n-1} + \log a_d + \log\bigg(1 + \sum_{k = 0}^{d-1}\dfrac{a_k}{a_d}.A_{n-1}^{k-d}\bigg).
	\end{align}
	From \eqref{an}, we get
	\begin{align}\label{an-1}
		\log A_{n-1} &= d\log A_{n-2} + \log a_d + \log\bigg(1 + \sum_{k = 0}^{d-1}\dfrac{a_k}{a_d}.A_{n-2}^{k-d}\bigg).
	\end{align}
	Substituting (\ref{an-1}) into (\ref{an}), we get
	\begin{align*}
		\log A_n&= d^2\log A_{n-2} + (d+1)\log a_d + d\log\bigg(1 + \sum_{k = 0}^{d-1}\dfrac{a_k}{a_d}.A_{n-2}^{k-d}\bigg)+ \log\bigg(1 + \sum_{k = 0}^{d-1}\dfrac{a_k}{a_d}.A_{n-1}^{k-d}\bigg).
	\end{align*}
	Clearly, we have
	\begin{equation}\label{an-j}
		\log A_{n-j} = d\log A_{n-j-1} + \log a_d + \log\bigg(1 + \sum_{k = 0}^{d-1}\dfrac{a_k}{a_d}.A_{n-j-1}^{k-d}\bigg)
	\end{equation}
	for every $j$ such that $n-j> 0.$ Using (\ref{an-j}) to find expressions for $A_{n-j}, j = 1,2,\ldots,n-1$ and substituting into (\ref{an}), we get
	\begin{align}\nonumber
		\log A_n &=  (d^{n-1} +d^{n-2}+ \cdots + d + 1)\log a_d+d^n \log A_0 +d^{n-1} \log\bigg(1 + \sum_{k = 0}^{d-1}\dfrac{a_k}{a_d}.A_{0}^{k-d}\bigg)\\ \nonumber
		&\quad+\cdots+  d\log\bigg(1 + \sum_{k = 0}^{d-1}\dfrac{a_k}{a_d}.A_{n-2}^{k-d}\bigg)+ \log\bigg(1 + \sum_{k = 0}^{d-1}\dfrac{a_k}{a_d}.A_{n-1}^{k-d}\bigg)\\\label{logan}
		&= \dfrac{d^n - 1}{d-1}\log a_d + d^n\log A_0 + d^n\bigg(\sum_{j = 0}^{n - 1}d^{-1-j}\log\bigg(1 + \sum_{k = 0}^{d-1}\dfrac{a_k}{a_d}.A_{j}^{k-d}\bigg)\bigg).
	\end{align}
	This proves \eqref{loganlemma}.
	Consider the series
	\begin{align*}
		d^n\sum_{j = n}^{\infty} d^{-1-j}\log\bigg(1 + \sum_{k = 0}^{d-1}\dfrac{a_k}{a_d}.A_{j}^{k-d}\bigg).
	\end{align*}
	Note that we have $k<d$. Recall that $a_k/a_d \geq 0$. Since $\sup_{j\geq n} A_j^{k-d} = A_n^{k-d}$, we have
	\begin{align*}
		0\leq d^n\sum_{j = n}^{\infty} d^{-1-j}\log\bigg(1 + \sum_{k = 0}^{d-1}\dfrac{a_k}{a_d}.A_{j}^{k-d}\bigg) &\leq d^n\sum_{j = n}^{\infty}d^{-1-j}.\log\bigg(1 + \sum_{k = 0}^{d-1}\dfrac{a_k}{a_d}.A_{n}^{k-d}\bigg)\\ &= \dfrac{1}{d-1}.\log\bigg(1 + \sum_{k = 0}^{d-1}\dfrac{a_k}{a_d}.A_{n}^{k-d}\bigg)
	\end{align*}
	and 
	\begin{align*}
		\lim\limits_{n\rightarrow\infty}\bigg(\dfrac{1}{d-1}.\log\bigg(1 + \sum_{k = 0}^{d-1}\dfrac{a_k}{a_d}.A_{n}^{k-d}\bigg)\bigg) &= 0.
	\end{align*}
	Hence 
	\begin{equation*}
		d^n\sum_{j = n}^{\infty} d^{-1-j}\log\bigg(1 + \sum_{k = 0}^{d-1}\dfrac{a_k}{a_d}.A_{j}^{k-d}\bigg) = o(1).
	\end{equation*}
	Now, let
	\begin{equation}\label{propref}
		R_n(d) = \sum_{j = n}^{\infty} d^{-1-j}\log\bigg(1 + \sum_{k = 0}^{d-1}\dfrac{a_k}{a_d}.A_{j}^{k-d}\bigg).
	\end{equation}
	We rewrite (\ref{logan}) as 
	\begin{align*}
		\log A_n &= \dfrac{d^n - 1}{d-1}\log a_d + d^n\log A_0 + d^n\bigg(\sum_{j = 0}^{n - 1}d^{-1-j}\log\bigg(1 + \sum_{k = 0}^{d-1}\dfrac{a_k}{a_d}.A_{j}^{k-d}\bigg) + R_n(d) - R_n(d)\bigg)\\
		&= \dfrac{d^n - 1}{d-1}\log a_d + d^n\log A_0 + d^n\bigg(\sum_{j = 0}^{\infty}d^{-1-j}\log\bigg(1 + \sum_{k = 0}^{d-1}\dfrac{a_k}{a_d}.A_{j}^{k-d}\bigg) - R_n(d)\bigg),
	\end{align*}
	which holds for all $n\geq 1$.  Let 
	$$K(d) = \sum_{j = 0}^{\infty}d^{-1-j}\log\bigg(1 + \sum_{k = 0}^{d-1}\dfrac{a_k}{a_d}.A_{j}^{k-d}\bigg).$$
	Then we have 
	\begin{align*}
		\log A_n &= d^n \bigg(\log(A_0) + \dfrac{\log(a_d)}{d-1}+K(d)\bigg)- \dfrac{\log (a_d)}{d-1} + o(1),\\
		A_n &= \exp\bigg(d^n \bigg(\log(A_0) + \dfrac{\log(a_d)}{d-1}+K(d)\bigg) - \dfrac{\log (a_d)}{d-1} + o(1)\bigg)\\
		&= \exp(o(1)).\exp\bigg(\log\bigg(a_d^{\frac{-1}{d-1}}\bigg)\bigg).\exp\bigg(d^n\bigg( \log(A_0) + \dfrac{\log(a_d)}{d-1}+K(d)\bigg)\bigg).
	\end{align*}
	Using Taylor series expansion for $\exp(o(1))$, we have
	\begin{align*}
		&= (1+o(1))a_d^{-\frac{1}{d-1}}\exp\bigg(d^n \bigg(\log (A_0)+\dfrac{\log (a_d)}{d-1}+K(d)\bigg)\bigg) 
	\end{align*}
	as $n\rightarrow \infty$. This completes the proof of the lemma.
\end{proof}
\begin{thm}\label{cdform}
	The number of nonisomorphic leaf-induced subtrees of the complete  $d$-ary tree, $C_h^d$  is given by 
	\begin{equation}\label{cdformula}
		N(C_h^d) = -N(C_{h-1}^d) + \begin{pmatrix}
			d + N(C_{h-1}^d)\\
			d 
		\end{pmatrix}
	\end{equation}
	with $N(C_0^d) = 1$. Furthermore, 
	\begin{equation}\label{2ndpart}
		N(C_h^d) = (1+o(1))(d!)^{\frac{1}{d-1}}\kappa(d)^{d^h}
	\end{equation}
	for some effectively computable constant $\kappa(d)>1$ as $h\rightarrow \infty$. 
\end{thm}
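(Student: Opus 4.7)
The plan is to obtain the recursion by a structural decomposition of leaf-induced subtrees, then recognize the recursion as a polynomial recurrence fitting the hypotheses of Lemma~\ref{lem1}.

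For the recursion, write $m = N(C_{h-1}^d)$ and let $r$ be the root of $C_h^d$, whose $d$ children each root an isomorphic copy of $C_{h-1}^d$. I would argue that the isomorphism classes of leaf-induced subtrees of $C_h^d$ are exactly: (i) the single-vertex tree, and (ii) topological trees whose root has outdegree $k \in \{2,\ldots,d\}$ and whose $k$ children-subtrees are each (independently) a leaf-induced subtree of $C_{h-1}^d$. The realization direction is easy: any tree as in (ii) arises in $C_h^d$ by distributing its $k$ children-subtrees into $k$ distinct branches of $r$ and selecting, inside each chosen branch, the leaves that induce the required child. For the converse, given a nonempty leaf-set $S$ of size at least $2$ with MRCA $v$: if $v = r$ then (ii) is immediate; if $v$ lies strictly below $r$, then the induced subtree lies inside a single $C_{h-1}^d$-branch, and a short induction shows its children-subtrees are leaf-induced subtrees of $C_{h-2}^d \subseteq C_{h-1}^d$, so (ii) still holds. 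The number of isomorphism classes in (ii) is the multiset count $\sum_{k=2}^d \binom{m+k-1}{k}$, and the hockey-stick identity $\sum_{k=0}^d \binom{m+k-1}{k} = \binom{m+d}{d}$, together with the contribution of $1$ from (i), gives $N(C_h^d) = \binom{m+d}{d} - m$, as claimed.

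For the asymptotic part, let $A_h := N(C_h^d)$. Expanding $\binom{m+d}{d} - m = \tfrac{1}{d!}(m+1)(m+2)\cdots(m+d) - m$ presents the recursion as $A_h = \sum_{k=0}^d a_k A_{h-1}^k$ with leading coefficient $a_d = 1/d! > 0$; all other coefficients are nonnegative, the only delicate case being the coefficient of $m^1$, which works out to $H_d - 1 > 0$ for $d \geq 2$, where $H_d$ is the $d$-th harmonic number. Since $A_0 = 1 > 0$, the hypotheses of the first conclusion of Lemma~\ref{lem1} are satisfied. Strict monotonicity of $(A_h)$ follows from the elementary estimate $\binom{m+d}{d} \geq 2m$ for $m \geq 1$ and $d \geq 2$, and $A_h \to \infty$ follows because the dominant term $A_{h-1}^d / d!$ forces doubly-exponential growth. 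The second conclusion of Lemma~\ref{lem1} then delivers $A_h = (1+o(1))(d!)^{1/(d-1)}\kappa(d)^{d^h}$ with $\kappa(d) = \exp\bigl(\log A_0 + \log(a_d)/(d-1) + K(d)\bigr)$ effectively computable from the formula for $K(d)$ in the proof of the lemma. Finally, $\kappa(d) > 1$ is forced by $A_h \to \infty$, since otherwise $\kappa(d)^{d^h}$ would be bounded.

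The main obstacle is pinning down the structural characterization in part one cleanly --- in particular, verifying that the case in which the MRCA lies strictly below the root contributes no isomorphism class beyond those already enumerated from the root-based multiset count. Once that is settled, both the combinatorial simplification via hockey-stick and the invocation of Lemma~\ref{lem1} are routine.
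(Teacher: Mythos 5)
Your proposal is correct and follows essentially the same route as the paper: the same decomposition of isomorphism classes by root outdegree into $r$-element multisets of leaf-induced subtrees of $C_{h-1}^d$, the same hockey-stick simplification to $\binom{m+d}{d}-m$, and the same reduction to a polynomial recurrence handled by Lemma~\ref{lem1}. The one genuine divergence is the final step $\kappa(d)>1$: you deduce it directly from $A_h\to\infty$ together with the asymptotic formula (if $\kappa(d)\le 1$ then $\kappa(d)^{d^h}$ is bounded, contradicting unboundedness of $A_h$), whereas the paper argues that $-\frac{\log d!}{d-1}+K(d)>0$ term by term using the inequality $N(C^d_{j+1})>N(C^d_j)^d$ --- an inequality that in fact fails for larger $j$ (e.g.\ $d=2$: $N(C^2_3)=11<16=N(C^2_2)^2$), so your softer argument is not only different but safer. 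Your added care about subtrees whose most recent common ancestor lies strictly below the root is a welcome tightening of a point the paper passes over quickly.
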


\begin{proof}
Fix $h\geq1$. Call $v$ the root of $C^d_h$ and $A_h$ the set of all nonisomorphic leaf-induced subtrees of $C_h^d$ whose root is $v$ and has root degree $r$ is uniquely determined by a choice of a $r$-element multiset $A_{h-1}$ (since all $r$ branches lie in $A_{h-1}$). So for every $r\in\{2,3,\ldots,d\}$, there are precisely
	\begin{equation*}\label{combrep}
		\begin{pmatrix}
			N(C^d_{h-1}) + r - 1\\r
		\end{pmatrix}
	\end{equation*}
such subtrees (with root degree $r$). Therefore, taking into consideration the subtree consisting only of the single leaf, and using the fact that the elements of $A_{h-1}$ are already counted this way, we get
	\begin{equation*}\label{eqncd}
		N(C_h^d) = 1 + \sum_{r = 2}^{d}\begin{pmatrix} N(C^d_{h-1})  + r - 1\\r \end{pmatrix}
	\end{equation*}
	nonisomorphic leaf-induced subtrees. Let $j = N(C^d_{h-1}) + r - 1$.  We get
	\begin{align*}
		N(C^d_h)&= 1 + \sum_{j = N(C^d_{h-1}) + 1}^{N(C^d_{h-1})  + d - 1}\begin{pmatrix} j\\j + 1 -N(C^d_{h-1}) \end{pmatrix}\\
		&= 1 + \sum_{j = N(C^d_{h-1}) + 1}^{N(C^d_{h-1})  + d - 1}\begin{pmatrix} j\\  N(C^d_{h-1}) - 1\end{pmatrix} \quad \text{since $\begin{pmatrix}
				n\\r
			\end{pmatrix} = \begin{pmatrix}
				n\\n-r
			\end{pmatrix}$}\\
		&= -N(C^d_{h-1})+N(C^d_{h-1}) + 1 + \sum_{j = N(C^d_{h-1}) + 1}^{N(C^d_{h-1})  + d - 1}\begin{pmatrix} j\\ N(C^d_{h-1}) - 1\end{pmatrix}\\
		&= -N(C^d_{h-1}) + \sum_{j = -1+N(C^d_{h-1})}^{N(C^d_{h-1})  + d - 1}\begin{pmatrix} j\\  N(C^d_{h-1}) - 1\end{pmatrix}.
	\end{align*}
	It can be shown by induction on $d$ that
	\begin{equation*} \label{*}
		\sum_{j = -1+N(C^d_{h-1})}^{N(C^d_{h-1})  + d - 1}\begin{pmatrix} j\\  N(C^d_{h-1})- 1 \end{pmatrix} = \begin{pmatrix}
			d + N(C_{h-1}^d)\\N(C_{h-1}^d)
		\end{pmatrix}.
	\end{equation*}
	
%
%
%
%
Hence we have
	\begin{equation*}
		N(C^d_h) = -N(C^d_{h-1}) + \begin{pmatrix}
			d + N(C_{h-1}^d)\\N(C_{h-1}^d)
		\end{pmatrix} = -N(C^d_{h-1})+\begin{pmatrix}
			d + N(C_{h-1}^d)\\d
		\end{pmatrix}.
	\end{equation*}
	Observe that $N(C_h^d)$ increases with $h$ since the nonisomorphic leaf-induced subtrees of $C^d_h$ include all nonisomorphic leaf-induced subtrees of $C^d_{h-1}$. To prove \eqref{2ndpart}, first set $A_h = N(C_h^d)$. We have
	\begin{align*}
		A_h &= -A_{h-1} + \begin{pmatrix}
			d+A_{h-1}\\d
		\end{pmatrix}\\
		&= -A_{h-1}+\dfrac{(A_{h-1} + d)!}{(A_{h-1}!)(d!)}\\
		&= -A_{h-1} + \dfrac{(A_{h-1} + d)(A_{h-1} + d-1)(A_{h-1} + d-2)\cdots(A_{h-1}+2)(A_{h-1}+1)}{d!}\\
		&= -A_{h-1} + \dfrac{A_{h-1}^d +  \bigg(\sum_{i=1}^{d}i\bigg)A_{h-1}^{d-1} +\cdots+ \bigg(\sum_{i = 1}^{d}\prod_{j = 1,j\neq i}^{d}j\bigg)A_{h-1} + d!}{d!}
	\end{align*}
	\begin{align*}
		&= \dfrac{-A_{h-1}d! + A_{h-1}^d +  \bigg(\sum_{i=1}^{d}i\bigg)A_{h-1}^{d-1}+ \cdots+ \bigg(\sum_{i = 2}^{d}\prod_{j = 1,j\neq i}^{d}j\bigg)A_{h-1} +d!A_{h-1}+ d!}{d!}\\
		&=  \dfrac{ A_{h-1}^d +  \bigg(\sum_{i=1}^{d}i\bigg)A_{h-1}^{d-1} +\cdots+ \bigg(\sum_{i = 2}^{d}\prod_{j = 1,j\neq i}^{d}j\bigg)A_{h-1}+ d!}{d!}.
	\end{align*}
	By setting $a_i$ as the coefficients of $A_{h-1}^i$, $i \in \{0,1,2,3,\ldots,d\}$, noting that $A_0 = 1$ and $a_d = 1/d!$ and using the identity derived in \eqref{an}, we have
	\begin{align}\label{logncd}
		\log A_h &= d\log A_{h-1} - \log d! + \log\bigg(1 + d! \sum_{k = 0}^{d-1}a_k A_{h-1}^{k-d}\bigg).
	\end{align}
	
	From Lemma \ref{lem1}, we get 
	\begin{equation*}
		N(C_h^d) = (1+o(1))(d!)^{\frac{1}{d-1}}\kappa(d)^{d^h},
	\end{equation*}
	where 
	\begin{align}\label{kappa(d)}
		\kappa(d) &= \exp\bigg(-\dfrac{\log d!}{d-1}+K(d)\bigg)
	\end{align}
	and
	\begin{align}\label{kd}
		K(d) &= \sum_{j = 0}^{\infty}d^{-1-j}.\log\bigg(1 + d!\sum_{k=0}^{d-1}a_k(N(C^d_j)^{k-d})\bigg).
	\end{align}
	
	From \eqref{logncd}, we get
	\begin{align}\nonumber
		\log A_{h+1} &= d\log A_h - \log d! + \log\bigg(1 + d! \sum_{k = 0}^{d-1}a_k A_{h}^{k-d}\bigg),\\\label{subs}
		\log \bigg(\dfrac{d!A_{h+1}}{A_h^d}\bigg)&=\log\bigg(1 + d! \sum_{k = 0}^{d-1}a_k A_{h}^{k-d}\bigg). 
	\end{align}	
	Recall that $A_h = N(C_h^d)$. Substituting \eqref{subs} into \eqref{kd}, we get
	\begin{align}\label{K(d)}
		K(d)
		&= \sum_{j=0}^{\infty}d^{-1-j}.\log \bigg(d!.\dfrac{N(C^d_{j+1})}{N(C^d_j)^d}\bigg).
	\end{align}
	We have
	$$-\dfrac{\log d!}{d-1}+K(d)=-\dfrac{\log d!}{d-1} + \sum_{j=0}^{\infty}d^{-1-j}.\log \bigg(d!.\dfrac{N(C^d_{j+1})}{N(C^d_j)^d}\bigg).$$
	Since $N(C^d_{j+1})>N(C^d_j)^d$, we obtain
	\begin{align*}
		-\dfrac{\log d!}{d-1}+K(d)&> -\dfrac{\log d!}{d-1}+\sum_{j=0}^{\infty}d^{-1-j}.\log d!\\
		&= -\dfrac{\log d!}{d-1} + \dfrac{\log d!}{d-1}\\
		&=0.
	\end{align*}	
	 Hence $\kappa(d) > 1$.
\end{proof}
The value of $\kappa(d)$ derived in \eqref{kappa(d)} for every $d$ can be numerically evaluated. Table~\ref{Table2} gives values of $\kappa(d)$ for $d\in \{2,3,\ldots,10\}$.

\begin{table}[h!]
	\centering
	\caption{Approximated values of $\kappa(d)$.}
	\begin{tabular}{|c|c|}
		\hline
		$d$& $\kappa(d)$\\
		\hline
		2&1.246020832983661\\
		\hline
		3&1.254860390384554\\
		\hline
		4&1.2189114976086313\\
		\hline
		5&1.1888457507131132\\
		\hline
		6&1.165394603276801\\
		\hline
		7&1.1469724134908297\\
		\hline
		8&1.1322182196849957\\
		\hline
		9&1.1201639471936817\\
		\hline
		10&1.1101387293827483\\
		\hline 
	\end{tabular} 
	\label{Table2}
\end{table}
From Table~\ref{Table2}, when $d=2$, we get
\begin{equation*}
	N(C_h^2) \sim 2(1.246020832983661)^{2^h}
\end{equation*}
as $h\rightarrow\infty$.

Propositions \ref{prop7} and \ref{prop8} below give further results about $\kappa(d)$.
\begin{pro}
	\label{prop7}
	The sequence $\kappa(d)_{d\geq2}$ converges to 1 as $d\rightarrow\infty$.
	\begin{proof}From equations \eqref{kappa(d)} and \eqref{kd}, we have
		\begin{equation}\label{kappa}
			\log (\kappa(d)) = -\dfrac{\log d!}{d-1} + \dfrac{\log d!+\log(d)}{d}+ \sum_{j\geq1}^{}\dfrac{\log d!+\log(N(C^d_{j+1})) -d.\log(N(C^d_j))}{d^{j+1}}.
		\end{equation}
		From \eqref{cdformula}, we can write
		\begin{align} \nonumber
			N(C^d_{j+1}) &= -N(C^d_j) + \begin{pmatrix} 
				d+N(C^d_j)\\d
			\end{pmatrix} = \sum_{k=0}^{d}a_k.N(C^d_j)^k \\ \nonumber
			&\leq \sum_{k=0}^{d}a_k.N(C^d_j)^d\\ \nonumber
			N(C^d_{j+1}) &\leq N(C^d_j)^d\sum_{k=0}^{d}a_k \\
			\dfrac{N(C^d_{j+1})}{N(C^d_j)^d} &\leq \sum_{k=0}^{d}a_k =d. \label{sub}
		\end{align}
		We make a substitution of  \eqref{sub} into \eqref{kappa}. We have
		\begin{align*}
			0\leq \log(\kappa(d)) &\leq -\dfrac{\log d!}{d-1} + \dfrac{\log d!+\log(d)}{d}+ \sum_{j\geq1}^{}\dfrac{\log d!+\log(d)}{d^{j+1}}\\
			&= -\dfrac{\log d!}{d-1} + \dfrac{\log d!+\log(d)}{d}+ (\log d!+\log(d))\sum_{j\geq1}^{}\dfrac{1}{d^{j+1}}\\
			&= -\dfrac{\log d!}{d-1} + \dfrac{\log d!+\log(d)}{d}+ \dfrac{\log d!+\log(d)}{d(d-1)}\\
			&= \dfrac{\log d}{d-1}
		\end{align*}
		It is clear that
		$$\lim\limits_{d\rightarrow\infty}\dfrac{\log d}{d-1} = 0.$$
		Hence 	$\kappa(d)\rightarrow 1$ as $d\rightarrow\infty$.
	\end{proof}
\end{pro}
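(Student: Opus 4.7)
The plan is to bound $\log\kappa(d)$ above and below by quantities that both tend to $0$, and then apply the squeeze theorem. The lower bound $\log\kappa(d)>0$ is already available from the proof of Theorem~\ref{cdform}, so the real work lies in producing a clean upper bound. I would start from the representation
\begin{equation*}
\log\kappa(d) = -\frac{\log d!}{d-1} + K(d), \qquad K(d)=\sum_{j\geq 0} d^{-1-j}\log\!\Bigl(d!\cdot\frac{N(C^d_{j+1})}{N(C^d_j)^d}\Bigr)
\end{equation*}
coming from \eqref{kappa(d)} and \eqref{K(d)}, so everything reduces to controlling the ratios $N(C^d_{j+1})/N(C^d_j)^d$ uniformly in $j$.

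The key step will be to show that $N(C^d_{j+1})/N(C^d_j)^d\leq d$ for every $j\geq 0$. From Theorem~\ref{cdform}, the recurrence $N(C^d_{j+1})=\sum_{k=0}^{d} a_k N(C^d_j)^k$ has nonnegative coefficients $a_0,\ldots,a_d$ (obtained by expanding $\binom{d+N(C^d_j)}{d}-N(C^d_j)$). Since $N(C^d_j)\geq 1$ for every $j\geq 0$, each term $a_k N(C^d_j)^k$ is dominated by $a_k N(C^d_j)^d$, so $N(C^d_{j+1})\leq N(C^d_j)^d \sum_{k=0}^d a_k$. Evaluating the polynomial at $A_j=1$ collapses the sum: $\sum_{k=0}^{d}a_k = -1+\binom{d+1}{d}=d$, which gives the desired inequality.

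Plugging this pointwise bound into $K(d)$ will turn the sum into a geometric series:
\begin{equation*}
K(d)\leq \log(d\cdot d!)\sum_{j\geq 0}d^{-1-j} = \frac{\log d! + \log d}{d-1}.
\end{equation*}
Combining with the leading $-\log d!/(d-1)$, the two $\log d!$ contributions cancel exactly, leaving $0<\log\kappa(d)\leq \log d/(d-1)$. Since $\log d/(d-1)\to 0$ as $d\to\infty$, the squeeze theorem will give $\log\kappa(d)\to 0$, hence $\kappa(d)\to 1$.

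I do not anticipate a genuine obstacle; the argument is essentially a two-line geometric-series calculation once the ratio bound is in hand. If anything, the main point requiring care is to justify that all coefficients $a_k$ in the polynomial recurrence are nonnegative (so that the termwise bound $a_k N(C^d_j)^k\leq a_k N(C^d_j)^d$ is valid) and that the clean identity $\sum a_k=d$ follows simply from evaluating the recurrence at $A_j=1$. This is what makes the two $\log d!/(d-1)$ terms cancel and forces the upper bound to depend only on $\log d$ rather than $\log d!$, which is the reason the bound tends to zero.
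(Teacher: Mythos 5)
Your proposal is correct and follows essentially the same route as the paper: both reduce to the uniform bound $N(C^d_{j+1})/N(C^d_j)^d\leq\sum_{k=0}^{d}a_k=d$ (via nonnegativity of the $a_k$ and $N(C^d_j)\geq1$), sum the resulting geometric series, and observe that the $\log d!$ terms cancel to leave $0<\log\kappa(d)\leq\log d/(d-1)$. The only cosmetic difference is that the paper splits off the $j=0$ term of $K(d)$ explicitly before bounding the tail, whereas you bound all terms at once; your remark that $\sum_k a_k=d$ follows from evaluating the recurrence polynomial at $1$ is a slightly cleaner justification than the paper's bare assertion.
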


\begin{pro}
	\label{prop8}
	Given $d\geq2$, there exists a positive integer $H\geq2$ such that
	\begin{equation*}
		N(C^d_h) = \big\lfloor d!^{\frac{1}{d-1}}\kappa(d)^{d^h}\big\rfloor
	\end{equation*}
	
	for every $h\geq H$.
	\begin{proof}	
		From \eqref{propref}, we can show that the relation
		$$0<R_n(d)\leq\dfrac{d^{-n}}{d-1}.\log\bigg(1+\sum_{k=0}^{d-1}\dfrac{a_k}{a_d}.A_n^{k-d}\bigg)$$
		
		 holds for every $d\geq2$ and every $n\geq1$. We deduce the double inequality
		
		\begin{flalign}\label{manipulate}
			d^n\bigg(\sum_{j=0}^{\infty}d^{-1-j}.\log\bigg(1+d!\sum_{k=0}^{d-1}a_k.A_j^{k-d}\bigg)-\dfrac{d^{-n}}{d-1}.d!\sum^{d-1}_{k=0}a_k.A_n^{k-d}\bigg)-\dfrac{d^n-1}{d-1}\log d!&&\\\nonumber
			\leq\log(A_n)\leq d^n\bigg(\sum_{j=0}^{\infty}d^{-1-j}.\log\bigg(1+d!\sum_{k=0}^{d-1}a_k.A_j^{k-d}\bigg)\bigg)-\dfrac{d^n-1}{d-1}\log d!&&
		\end{flalign}
	From \eqref{manipulate}, we obtain
		\begin{flalign}\label{kref}
			d^n\bigg(\sum_{j=0}^{\infty}d^{-1-j}.\log\bigg(1+d!\sum_{k=0}^{d-1}a_k.A_j^{k-d}\bigg)\bigg) - \dfrac{d!}{d-1}A_n^{-1}\sum^{d-1}_{k=0}a_k-\dfrac{d^n}{d-1}\log d!&&\\ \nonumber +\dfrac{1}{d-1}\log d!
			\leq\log(A_n)\leq	d^n\bigg(\sum_{j=0}^{\infty}d^{-1-j}.\log\bigg(1+d!\sum_{k=0}^{d-1}a_k.A_j^{k-d}\bigg)\bigg)-\dfrac{d^n}{d-1}\log d!+\dfrac{1}{d-1}\log d!
		\end{flalign}
	Given that $\sum^{d-1}_{k=0}a_k=\dfrac{dd!-1}{d!}$ and substituting \eqref{K(d)} into \eqref{kref}, we obtain
		\begin{flalign*}	
			d^n\bigg(-\dfrac{\log d!}{d-1}+K(d)\bigg) - \dfrac{d!}{d-1}.A_n^{-1}.\dfrac{dd!-1}{d!} +\log d!^{\frac{1}{d-1}}&&\\\leq\log(A_n)\leq d^n\bigg(-\dfrac{\log d!}{d-1}+K(d)\bigg)+\log d!^{\frac{1}{d-1}}&&\\
			d!^{\frac{1}{d-1}}\kappa(d)^{d^n}.\exp\bigg(-\dfrac{d!}{d-1}.A_n^{-1}.\dfrac{dd!-1}{d!}\bigg)\leq A_n\leq d!^{\frac{1}{d-1}}\kappa(d)^{d^n}
		\end{flalign*}
		Replacing $A_n$ with $N(C_h^d)$ and using the basic inequality $e^t\geq1+t$, we obtain
		\begin{equation}\label{8}
			d!^{\frac{1}{d-1}}\kappa(d)^{d^h} - \dfrac{d!^{\frac{d}{d-1}}}{d-1}.\dfrac{\kappa(d)^{d^h}}{N(C^d_h)}\bigg(d-\dfrac{1}{d!}\bigg)\leq N(C^d_h)\leq d!^{\frac{1}{d-1}}\kappa(d)^{d^h}
		\end{equation}
		We substitute $N(C_h^d)$ in the left-hand side of \eqref{8} with 
		$$d!^{\frac{1}{d-1}}\kappa(d)^{d^h} - \dfrac{d!^{\frac{d}{d-1}}}{d-1}.\dfrac{\kappa(d)^{d^h}}{N(C^d_h)}\bigg(d-\dfrac{1}{d!}\bigg).$$
		Iterating this substitution process a (finite) number of times leads to
		$$ d!^{\frac{1}{d-1}}\kappa(d)^{d^h}-\delta(d)\leq N(C^d_h)\leq d!^{\frac{1}{d-1}}\kappa(d)^{d^h}$$
		for some constant $0<\delta(d)<1.$ An immediate consequence of this double inequality is that
		$$N(C^d_h) = \big\lfloor d!^{\frac{1}{d-1}}\kappa(d)^{d^h}\big\rfloor$$
		holds for all $h\geq H$ for certain $H$.
	\end{proof}
\end{pro}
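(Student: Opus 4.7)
The plan is to exploit the asymptotic expansion furnished by Lemma~\ref{lem1} to bracket $N(C^d_h)$ between $d!^{1/(d-1)}\kappa(d)^{d^h} - \delta(d)$ and $d!^{1/(d-1)}\kappa(d)^{d^h}$ for some constant $\delta(d) \in (0,1)$, once $h$ is sufficiently large. Because $N(C^d_h)$ is an integer, this two-sided bound immediately forces the floor identity.

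The first step is to control the tail of the series $K(d)$ that appears in the asymptotic expression for $\log N(C^d_h)$; denote this tail by $R_h(d)$. Using $\log(1+x)\leq x$ together with the monotonicity $A_j\geq A_h$ for $j\geq h$ (so that $A_j^{k-d}\leq A_h^{k-d}$ for $k<d$), I would bound $R_h(d)$ above by $(d^{-h}/(d-1))\sum_{k=0}^{d-1}(a_k/a_d)\,N(C^d_h)^{k-d}$. Since this quantity is of order $1/N(C^d_h)$, multiplying by $d^h$ and exponentiating in the Lemma~\ref{lem1} identity yields a multiplicative sandwich
\[
d!^{1/(d-1)}\kappa(d)^{d^h}\exp\!\Bigl(-\tfrac{C_d}{N(C^d_h)}\Bigr) \leq N(C^d_h) \leq d!^{1/(d-1)}\kappa(d)^{d^h},
\]
where $C_d$ is computable from the coefficients $a_k$ of the polynomial recurrence derived in Theorem~\ref{cdform}; using the evaluation $\sum_{k=0}^{d-1}a_k = (d\cdot d!-1)/d!$ (obtained by substituting $A_{h-1}=1$ in the recursion, which gives $A_h=d$), one finds $C_d = (d\cdot d!-1)/(d-1)$.

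Applying the elementary inequality $e^{-t}\geq 1-t$ now converts the multiplicative sandwich into an additive one of the form $N(C^d_h) \geq d!^{1/(d-1)}\kappa(d)^{d^h} - d!^{1/(d-1)}C_d\,\kappa(d)^{d^h}/N(C^d_h)$. At this stage the error term still carries the ratio $\kappa(d)^{d^h}/N(C^d_h)$, which tends to $d!^{-1/(d-1)}$ by the main asymptotic and is therefore only $O(1)$, not yet smaller than $1$. To finish, I would bootstrap by iteratively substituting the current lower bound for $N(C^d_h)$ in the denominator of the error term. Each iteration replaces the denominator by a strictly larger quantity, so the error contracts; the improvement per step is of order $\kappa(d)^{-d^h}$, so after a finite number of rounds the accumulated additive error stabilises at some $\delta(d)<1$ provided $h$ exceeds a threshold $H=H(d)$. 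The main obstacle is verifying rigorously that this bootstrap drives the error strictly below $1$: this amounts to checking that a geometric contraction ratio built from $\kappa(d)$ and $C_d$ becomes smaller than $1$ once $N(C^d_h)$ passes an effectively computable bound, and it is here that one uses both $\kappa(d)>1$ from Theorem~\ref{cdform} and the doubly exponential growth of $N(C^d_h)$. Integrality of $N(C^d_h)$ then closes the proof.
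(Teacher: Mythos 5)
Your proposal retraces the paper's own argument almost step for step: the tail bound on $R_n(d)$ from \eqref{propref}, the multiplicative sandwich extracted from the identity in Lemma~\ref{lem1}, the conversion to an additive bound via $e^t\geq 1+t$, the bootstrap in the denominator, and the final appeal to integrality; even your constant $C_d=(d\cdot d!-1)/(d-1)$ is exactly the paper's. However, the step you yourself flag as ``the main obstacle'' is a genuine gap, and it does not close. Write $B=d!^{1/(d-1)}\kappa(d)^{d^h}$, $N=N(C^d_h)$ and $c=(d\cdot d!-1)/(d-1)$; the additive sandwich \eqref{8} reads $B-cB/N\leq N\leq B$, and since $B\geq N$ the error term $cB/N$ is at least $c\geq 3$. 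Substituting the lower bound into the denominator replaces the error by $c/(1-c/N)$, and iterating converges to the fixed point of $x\mapsto cB/(B-x)$, which is $c+O(c^2/B)$ --- still at least $3$, never below $1$. There is no geometric contraction: each round improves the bound only by $O(c^2/N)$ and the iteration stabilises at $c$, not at some $\delta(d)<1$. (The paper's proof makes the identical unsupported assertion at the same point, so you have faithfully reproduced its flaw rather than repaired it.)

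The loss happens earlier, when $\log\bigl(1+\sum_{k}(a_k/a_d)A_n^{k-d}\bigr)$ is bounded by $\bigl(\sum_k a_k/a_d\bigr)A_n^{-1}=(d\cdot d!-1)/A_n$: this treats every coefficient as though it multiplied $A_n^{-1}$, whereas only $a_{d-1}$ does. Keeping the expansion exact gives
\begin{equation*}
B-N \;=\; \frac{a_{d-1}}{d\,a_d}\,(1+o(1)),
\end{equation*}
which equals $\tfrac12$ for $d=2$ (so the floor formula is salvageable there, consistent with the paper's closing claim about $N(C^2_h)$) but equals $\tfrac{d+1}{2}\geq 2$ for $d\geq 3$. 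Hence for $d\geq 3$ no bootstrap can succeed because the statement itself fails: for $d=3$ one has $N(C^3_3)=1123$ while $\sqrt{6}\,\kappa(3)^{27}>1124.99$, and in general $B-N\geq A_h\cdot\tfrac1d\log(1+\tfrac{a_{d-1}}{a_dA_h})>1$ once $A_h$ is moderately large, so $\lfloor B\rfloor\geq N+1$ for all large $h$. A correct proof must restrict to $d=2$ (or amend the statement for $d\geq3$) and run the tail estimate with the $a_{d-1}A_n^{-1}$ term isolated, showing directly that $0<B-N<1$ for $h$ large.
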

For $d=2$, 
the previous condition $A_h> (d d!-1) (d-1)$ becomes $A_h > 3$, which means that $N(C^2_h) \geq4$, hence $H=2$. We deduce that
$$N(C^2_h)=\lfloor 2(1.2460208329836625089431529441999359284665241772983812581\ldots)^{2^h} \rfloor$$
for every $h\geq2$.

\end{document}